\newtheorem{Theorem}{Theorem}[section]
\newtheorem{cor}{Corollary}[section]
\newtheorem{lemma}{Lemma}
\title{Contact Structure on the Indicatrix Bundle\\
of Finslerian Warped Product Manifolds}
\author{H. Attarchi and M. M. Rezaii}
\begin{document}
\maketitle
\noindent
\begin{abstract}
In this paper, a comprehensive study of contact and Sasakian structures on the indicatrix bundle of Finslerian warped product manifolds is reconstructed. In addition, the Kahler structure on the tangent bundle of these manifolds is studied for some different metrics. Throughout the paper, the contact structure of indicatrix bundle in warped product Finsler manifolds is presented. It is shown that indicatrix bundle cannot be a Sasakian manifold.
\vspace{.3cm}

{\bf Keywords:} Contact structure, Finslerian warped product, Indicatrix bundle, Sasakian manifold, Kahler structure.
\end{abstract}

\section{Introduction}
One of the basic concepts in the present work is the warped product. Bishop and O'Neill~\cite{onil} studied manifolds with negative curvatures and introduced the notion of warped product as a generalization of Riemannian products. Afterwards, warped product was used to model the standard space time, especially in the neighborhood of stars and black holes~\cite{onil2}. In Riemannian geometry, warped product manifolds and submanifolds were studied in numerous works~\cite{bejancu1,chen}. In~\cite{Rezaii,kosma}, the concept of Finslerian warped product was developed. In this work, some properties of Finsler manifolds are expanded to the warped product Finsler manifolds. First, with respect to importance of contact structure on the indicatrix bundle which is studied from different aspects~\cite{balan}, the contact structure of indicatrix bundle of warped product Finsler manifolds is reconstructed as a development of the work~\cite{bejancu2008} on Finsler manifolds. Then, the conditions of being a Kahler manifold on tangent bundle of Finslerian warped product manifolds are presented as a generalization of the works~\cite{opiro1,payghan}. In addition, it is also found that indicatrix bundle cannot be a Sasakian manifold in $TM$ for Finsler manifolds. These concepts are very closed and in some viewpoints even related to each other. The contact geometry in many ways is an odd-dimensional counterpart of symplectic geometry, which belongs to the even-dimensional world. Both contact and symplectic geometry are motivated by the mathematical formalism of classical mechanics in which either the even-dimensional phase space of a mechanical system or the odd-dimensional extended phase space including the time variable is considered. Also, Sasakian structures and Kahlerian structures are known as dual structures belong to odd  dimensional and even-dimensional geometry, respectively~\cite{Blair}.

Aiming at finding the relations of these structures on a warped product Finsler manifold, this paper is organized in the following way. In section 2, warped product on the Finsler manifolds is reviewed with respect to the idea of Finslerian warped product manifolds in~\cite{Rezaii,kosma}. In Section 3, the triple $(\varphi,\eta,\xi)$ is introduced which is compatible with the Sasakian metric on $TM$ in the sense of contact metric definition~\cite{Blair}. Then, it is proved that this structure is the contact structure of indicatrix bundle for warped product Finsler manifolds. This result is a generalization of proposition 4.1 in~\cite{bejancu2008} for warped product Finsler manifolds. In Section 4, the Kahler structure of a warped product Finsler manifold is studied with respect to the sasaki and Oproiu's metrics~\cite{opiro1}. Section 4 of this work is similar to the study conducted on Finsler manifolds in~\cite{payghan}. After proving that that indicatrix bundle naturally has contact structure, it will cross the minds that\\
"\emph{Does it happen that $I\!M$ under some conditions has Sasakian structure?}"\\
Finally in section 5, The components of Levi-Civita connection on the indicatrix bundle of a Finsler manifold for the restricted Sasakian metric are calculated. To this end, a new frame is introduced in which $TTM$ can be written in the form of direct sum of vertical Liouville vector field and tangent bundle of indicatrix bundle. In the rest of this section, it is proved that the indicatrix bundle cannot have a Sasakian structure with the lifted sasaki metric $G$ and natural almost complex structure $J$ on tangent bundle. Therefore, we should not have any expectation of properties of Sasakian manifolds in Riemannian geometry on indicatrix bundle as a Riemannian submanifold of $(TM,G,J)$.

%%%%%%%%%%%%%%%%%%%%%%%%%%%%%%%%%%%%%%%%%%%%%%%%%%%%%%%%%%%%%%%%%%%%%%%%%%%%%%%%%%%%%%%%%%%%%%%%%%%%%%%%%%
\section{Preliminaries and Notations}
Suppose that $(M_1,F_1)$ and $(M_2,F_2)$ are two Finsler manifolds with $\dim M_1=n$ and $\dim M_2=m$, respectively. Let $M=M_1\times M_2$ be the product of these two Finslerian manifolds and consider the function $F:TM^0\longrightarrow\mathbb{R}^+$ given by
\begin{equation}~\label{F}
F=\sqrt{F_1^2+f^2F_2^2}
\end{equation}
where $TM^0:=TM_1^0\oplus TM_2^0$ and $f:M_1\longrightarrow\mathbb{R}$, is Fundamental function of Finslerian manifold $M$. It was proved in~\cite{Rezaii} that $(M,F)$ is a Finsler manifold called warped product Finsler manifold. The indices $\{i,j,k,\ldots\}$ and $\{\alpha,\beta,\lambda,\ldots\}$ are used here for the ranges $1,\ldots,n$ and $1,\ldots,m$, respectively. In addition, the indices $\{a,b,c,\ldots\}$ are used for the range $1,\ldots,n+m$.
First, a review of some formulas in~\cite{Rezaii} is presented. The natural basis of Finslerian manifolds $M_1$ and $M_2$ is given as follows:
\begin{equation}~\label{basis1}
\left\{
\begin{array}{l}
TTM_1=<\frac{\delta}{\delta x^i},\frac{\partial}{\partial y^i}>, \ \ \ \ T^*TM_1=<dx^i,\delta y^i> \cr
TTM_2=<\frac{\delta}{\delta u^{\alpha}},\frac{\partial}{\partial v^{\alpha}}>, \ \ \ \ T^*TM_2=<du^{\alpha},\delta v^{\alpha}>
\end{array}
\right.
\end{equation}
It is supposed that $g_{ij}=\frac{1}{2}\frac{\partial^2F_1^2}{\partial y^i\partial y^j}$ and $g_{\alpha\beta}=\frac{1}{2}\frac{\partial^2F_2^2}{\partial v^{\alpha}\partial v^{\beta}}$. Then, the Hessian matrix of $F$ with respect to the coordinate $(\textbf{x},\textbf{y})\in TM$ is shown by $(g_{ab})$ and expressed as follows
\begin{equation}~\label{metricF}
(g_{ab})=\left(\frac{1}{2}\frac{\partial^2F^2}{\partial\mathbf{y}^a\partial\mathbf{y}^b}\right)=
\left(%
\begin{array}{cc}
  \frac{1}{2}\frac{\partial^2F_1^2}{\partial y^i\partial y^j}& 0 \\
  0 & \frac{f^2}{2}\frac{\partial^2F_2^2}{\partial v^{\alpha}\partial v^{\beta}}\\
  \end{array}%
\right)=
\left(%
\begin{array}{cc}
  g_{ij} & 0 \\
  0 & f^2g_{\alpha\beta}\\
  \end{array}%
\right)
\end{equation}
where $\mathbf{x}=(x,u)\in M_1\times M_2$, $\mathbf{y}=(y,v)\in T_xM_1\oplus T_uM_2$ and $\mathbf{y}^a=\delta_i^ay^i+\delta_{\alpha+n}^av^{\alpha}$.

If the semi-spray coefficients of $F_1$, $F_2$ and $F$ are shown by $G^i$, $G^{\alpha}$ and $B^a$, respectively, then the following equations can be obtained
\begin{equation}~\label{B i}
B^i=G^i-\frac{1}{4}g^{ij}F_2^2\frac{\partial f^2}{\partial x^j}\ ,\ \ \ B^{\alpha+n}=G^{\alpha}+\frac{1}{4f^2}g^{\alpha\beta}\frac{\partial F_2^2}{\partial v^{\beta}}\frac{\partial f^2}{\partial x^i}y^i
\end{equation}
For simplicity, $B^{\alpha+n}$ is denoted by $B^{\alpha}$. In addition, nonlinear connection coefficient of $F$ is shown as follows

\centerline{$
B_b^a:=
\left(%
\begin{array}{cc}
  B_j^i & B_j^{\alpha} \\
  B_{\beta}^i & B_{\beta}^{\alpha}\\
  \end{array}%
\right)
$}
\noindent where $B_j^i=\frac{\partial B^i}{\partial y^j},\ B_j^{\alpha}=\frac{\partial B^{\alpha}}{\partial y^j}, B_{\beta}^i=\frac{\partial B^i}{\partial v^{\beta}}, B_{\beta}^{\alpha}=\frac{\partial B^{\alpha}}{\partial v^{\beta}}$ and thus:
\begin{equation}~\label{Bba}
\left\{
\begin{array}{l}
B_j^i=G_j^i-\frac{1}{4}F_2^2\frac{\partial f^2}{\partial x^h}\frac{\partial g^{ih}}{\partial y^j}\\
B_{\beta}^i=-\frac{1}{4}g^{ij}\frac{\partial f^2}{\partial x^j}\frac{\partial F_2^2}{\partial v^{\beta}}\\
B_j^{\alpha}=\frac{1}{4f^2}g^{\alpha\beta}\frac{\partial f^2}{\partial x^j}\frac{\partial F_2^2}{\partial v^{\beta}}\\
B_{\beta}^{\alpha}=G_{\beta}^{\alpha}+\frac{1}{2f^2}\frac{\partial f^2}{\partial x^j}y^j\delta_{\beta}^{\alpha}
\end{array}
\right.
\end{equation}
Here, some contents and notations which they are needed in the rest of this work are presented. First, indicatrix bundle of Finsler manifold $(M,F)$ is denoted by $I\!M$ and defined in~\cite{bao-chern-shen} by
$$I\!M=\{(x,y)\in TM\ | \ F(x,y)=1\}.$$
A contact structure on a $(2n+1)$-dimensional Riemannian manifold $(M,g)$ is a triple $(\varphi,\eta,\xi)$, where $\varphi$ is a $(1,1)$-tensor, $\eta$ a global 1-form and $\xi$ a vector field, such that:

\centerline{$\varphi(\xi)=\eta\circ\varphi=0, \ \ \ \ \ \ \eta(\xi)=1,\ \ \ \ \ \ \varphi^2=-Id+\eta\otimes\xi$}
\centerline{$g(\varphi X,\varphi Y)=g(X,Y)-\eta(X)\eta(Y), \ \ \ \ \ \ \ d\eta(X,Y)=g(X,\varphi(Y))$}
\noindent for all $X,Y\in\Gamma(TM)$~\cite{Blair}.

%%%%%%%%%%%%%%%%%%%%%%%%%%%%%%%%%%%%%%%%%%%%%%%%%%%%%%%%%%%%%%%%%%%%%%%%%%%%%%%%%%%%%%%%%%%%%%%%%%%%%%%%%
\section{Contact Structure of Indicatrix Bundle on the Warped Product Finsler Manifolds}
Let $(M,F)$ be a warped product Finsler manifold. In this section, a local frame of tangent and cotangent bundle on $TM$ is choosen which is suitable for studying the contact structure on $I\!M$ as follows:
\begin{eqnarray}~\label{M basis1}
\left\{
\begin{array}{l}
TTM=<\frac{\delta^*}{\delta^*x^i},\ \frac{\delta^*}{\delta^*u^{\alpha}},\ \frac{\partial}{\partial y^i},\ \frac{\partial}{\partial v^{\alpha}}>\cr
T^*TM=<dx^i,\ du^{\alpha},\ \delta^*y^i,\ \delta^*v^{\alpha}>
\end{array}
\right.
\end{eqnarray}
where
$$\left\{
\begin{array}{l}
\frac{\delta^*}{\delta^*x^i}:=\frac{\partial}{\partial x^i}-B_i^j\frac{\partial}{\partial y^j}-B_i^{\alpha}\frac{\partial}{\partial v^{\alpha}},\ \delta^*y^i:=dy^i+B_j^idx^j+B_{\alpha}^idu^{\alpha}
\cr
\frac{\delta^*}{\delta^*u^{\alpha}}:=\frac{\partial}{\partial u^{\alpha}}-B_{\alpha}^{\beta}\frac{\partial}{\partial v^{\beta}}-B_{\alpha}^i\frac{\partial}{\partial y^i},\ \delta^*v^{\alpha}:=dv^{\alpha}+B_{\beta}^{\alpha}du^{\beta}+B_i^{\alpha}dx^i
\end{array}
\right.$$
which are well-defined by considering Proposition 1 in~\cite{Rezaii}. In addition, with respect to these bases the Sasakian lift of the fundamental tensor $g_{ab}$ in~(\ref{metricF}) on $TM$ is shown by $G$ and given by:
\begin{equation}~\label{metric}
G:=g_{ij}dx^idx^j+f^2g_{\alpha\beta}du^{\alpha}du^{\beta}+g_{ij}\delta^*
y^i\delta^*y^j+f^2g_{\alpha\beta}\delta^*v^{\alpha}\delta^*v^{\beta}
\end{equation}
The \emph{vertical Liouville} vector field $L$ of $M$ which is perpendicular to the level curve of $F$ can be calculated as follows:
\begin{eqnarray}~\label{gradiant}
L:=grad F=\frac{grad F^2}{2F}=\frac{y^i}{F}\frac{\partial}{\partial y^i}+\frac{v^{\alpha}}{F}\frac{\partial}{\partial v^{\alpha}}
\end{eqnarray}
It can be seen that $G(L,L)=1$. The dual 1-form $\eta$ of vertical Liouville vector field $L$ is equal to $dF$ or can be calculated by $\eta(X)=G(L,X)$ which can be locally presented as follows:
\begin{equation}~\label{diff F2}
\eta:=dF=\frac{dF^2}{2F}=\frac{y^j}{F}g_{ij}\delta^*y^i+f^2\frac{v^{\beta}}{F}g_{\alpha\beta}\delta^*v^{\alpha}
\end{equation}
The almost complex structure $J$ on $TTM$ compatible with Sasakian metric $G$ is defined by:
\begin{equation}~\label{a.c.s.}
J:=\frac{\delta^*}{\delta^*x^i}\otimes\delta^*y^i-\frac{\partial}{\partial y^i}\otimes dx^i+\frac{\delta^*}{\delta^*u^{\alpha}}\otimes\delta^*v^{\alpha}-\frac{\partial}{\partial v^{\alpha}}\otimes du^{\alpha}
\end{equation}
The \emph{horizontal Liouville} vector field of $M$ defined by $JL$ and has the local presentation as follows:
\begin{equation}~\label{H liovil}
\xi:=JL=\frac{y^i}{F}\frac{\delta^*}{\delta^*x^i}+\frac{v^{\alpha}}{F}\frac{\delta^*}{\delta^*u^{\alpha}}
\end{equation}
The horizontal Liouville vector field $\xi$ play the role of \emph{Reeb vector field}~\cite{Blair} in the present contact structure on the indicatrix bundle. The dual 1-form $\bar{\eta}$ of $\xi$ is defined by $\bar{\eta}(X)=G(\xi,X)$, and is locally expressed as below:
\begin{equation}~\label{1-form}
\bar{\eta}:=\frac{y^i}{F}g_{ij}dx^j+f^2\frac{v^{\alpha}}{F}g_{\alpha\beta}du^{\beta}
\end{equation}
The restriction of $\xi$ and $\bar{\eta}$ to the indicatrix bundle is shown by $\xi^*$ and $\eta^*$. Since, $F$ is constant and equal to the unit on the indicatrix bundle $I\!M$, equations~(\ref{H liovil}) and~(\ref{1-form}) are changed to following forms:
\begin{equation}~\label{H liovil*}
\left\{
\begin{array}{l}
\xi^*=y^i\frac{\delta^*}{\delta^*x^i}+v^{\alpha}\frac{\delta^*}{\delta^*u^{\alpha}} \cr
\eta^*=y^ig_{ij}dx^j+f^2v^{\alpha}g_{\alpha\beta}du^{\beta}
\end{array}
\right.
\end{equation}
The (1,1)-tensor field $\varphi$ is set on the indicatrix bundle $I\!M$ as follows:
\begin{equation}~\label{phi}
\varphi:=J|_{D} \ ,\ \ \ \ \varphi(\xi)=0
\end{equation}
where $D=\{X\in TM\ |\ \eta(X)=\eta^*(X)=0\}$. The distribution $D$ is called \emph{contact distribution} in contact manifolds. Also, notations $\bar{G}$ are used to restrict metric $G$ to the indicatrix bundle. Now, the following Theorem can be stated:
\begin{Theorem}~\label{contact}
Let the 4-tuple $(\varphi,\eta^*,\xi^*,\bar{G})$ be defined as above. Then the indicatrix bundle of a warped product Finsler manifold with $(\varphi,\eta^*,\xi^*,\bar{G})$ is a contact manifold.
\end{Theorem}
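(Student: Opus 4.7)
The plan is to verify the six axioms of a contact metric structure in order, most of which will follow by bookkeeping from the definitions (\ref{H liovil*})--(\ref{phi}) together with the $G$-compatibility of $J$. The key preparatory observation is that, since $\eta=dF$ and $F\equiv 1$ on $I\!M$, the vertical Liouville field $L$ is the outward unit normal to $I\!M$ in $(TM,G)$; therefore $T(I\!M)=\ker\eta|_{I\!M}$, and so the defining condition $D=\{X:\eta(X)=\eta^*(X)=0\}$ reduces on $T(I\!M)$ to $D=\ker\eta^*$. Moreover $\xi^*\in T(I\!M)$ because $\xi^*$ is horizontal while $\eta$ has purely vertical support, and a short calculation from (\ref{H liovil*}) using $g_{ij}y^iy^j+f^2g_{\alpha\beta}v^\alpha v^\beta=F^2=1$ on $I\!M$ gives $\eta^*(\xi^*)=1$. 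This yields the splitting $T(I\!M)=\mathbb{R}\xi^*\oplus D$, so that $\varphi$ (extended by $\varphi(\xi^*)=0$) is unambiguously defined on all of $T(I\!M)$.

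Next I would dispatch the algebraic axioms. The relations $\varphi(\xi^*)=0$ and $\eta^*(\xi^*)=1$ are in place. For $\eta^*\circ\varphi=0$ it suffices to check on $D$: if $X\in D$, then
\[
\eta^*(\varphi X)=G(\xi,JX)=G(JL,JX)=G(L,X)=\eta(X)=0,
\]
using that $J$ is a $G$-compatible almost complex structure. The identity $\varphi^{2}=-Id+\eta^*\otimes\xi^*$ follows by testing separately on $\xi^*$ (giving $0=0$) and on $X\in D$ (reducing to $J^2X=-X$). Finally, for the metric compatibility, I would decompose $X=\eta^*(X)\xi^*+X_D$ and $Y=\eta^*(Y)\xi^*+Y_D$ with $X_D,Y_D\in D$, use $\bar{G}(\xi^*,\xi^*)=G(JL,JL)=G(L,L)=1$, and then $\bar{G}(\varphi X,\varphi Y)=G(JX_D,JY_D)=G(X_D,Y_D)$ collapses to $\bar{G}(X,Y)-\eta^*(X)\eta^*(Y)$.

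The main obstacle is the differential condition $d\eta^*(X,Y)=\bar{G}(X,\varphi Y)$. I would first compute $d\bar{\eta}$ on all of $TM$ in the frame (\ref{M basis1}), writing $\bar{\eta}=(y^j/F)g_{ij}dx^i+f^2(v^\beta/F)g_{\alpha\beta}du^\alpha$ and expanding the exterior derivative carefully. This requires two ingredients: the total symmetry of the Cartan-type tensors $\partial g_{ij}/\partial y^k$ and $\partial g_{\alpha\beta}/\partial v^\gamma$, which kills the mixed horizontal--horizontal and vertical--vertical contributions; and the structure equations that express $d(\delta^*y^i)$, $d(\delta^*v^\alpha)$ and the commutators $[\delta^*/\delta^*x^i,\partial/\partial y^j]$ etc.\ in terms of the nonlinear connection coefficients $B^a_b$ given by (\ref{Bba}). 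The cross-terms between the $M_1$- and $M_2$-factors produced by the warp function $f$ are where the computation is most delicate and constitute the principal departure from the unwarped case treated in~\cite{bejancu2008}; these must be shown to cancel into the form dictated by (\ref{a.c.s.}) and (\ref{metric}). The resulting 2-form should match $\bar{G}(\cdot,J\cdot)$ in the same frame, and pulling back to $I\!M$ together with $\varphi=J|_D$ and $\varphi(\xi^*)=0$ converts this into $d\eta^*(X,Y)=\bar{G}(X,\varphi Y)$, completing the verification.
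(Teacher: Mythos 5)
Your proposal follows essentially the same route as the paper: the algebraic axioms are dispatched directly from the definitions and the $G$-compatibility of $J$, and the substance of the proof is the local-frame computation showing $d\eta^*=g_{ij}dx^i\wedge\delta^*y^j+f^2g_{\alpha\beta}du^{\alpha}\wedge\delta^*v^{\beta}$, i.e.\ the K\"ahler form of $(G,J)$, which then restricts to $\bar{G}(\cdot,\varphi\cdot)$ on the contact distribution. Your treatment of the algebraic part (the splitting $T(I\!M)=\mathbb{R}\xi^*\oplus D$, the verification $\eta^*(\xi^*)=1$, and the check that $\varphi$ preserves $D$) is actually more explicit than the paper's, which dismisses those conditions as ``easy to be proved''; the only thing left to execute is the exterior-derivative expansion you have correctly outlined.
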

\begin{proof}
The compatibility of $\varphi$ and the metric $\bar{G}$ is equivalent to compatibility of $J$ and $G$. Also, The conditions
$$\eta^*\circ\varphi=0\ ,\ \ \ \ \varphi(\xi^*)=0\ ,\ \ \ \ \varphi^2=-I+\xi^*\otimes\eta^*$$
are easy to be proved by considering equations~(\ref{H liovil*}) and~(\ref{phi}). To complete the proof, the condition $d\eta^*(X,Y)=\bar{G}(X,\varphi Y)$ for the vector fields $X,Y\in\Gamma(TIM)$ needs to be checked. By calculating $d\eta^*$ the following can be obtained:\\

\centerline{$d\eta^*=-G_k^jg_{ij}dx^i\wedge dx^k+\frac{\partial g_{ij}}{\partial x^k}y^jdx^i\wedge dx^k+g_{ij}dx^i\wedge\delta^*y^j$}
\centerline{$+\frac{\partial f^2}{\partial x^k}v^{\beta}g_{\alpha\beta}du^{\alpha}\wedge dx^k-f^2B_k^{\beta}g_{\alpha\beta}du^{\alpha}\wedge dx^k-B_{\alpha}^jg_{ij}dx^i\wedge du^{\alpha}$}
\centerline{$-f^2G_{\gamma}^{\beta}g_{\beta\alpha}du^{\alpha}\wedge du^{\gamma}+f^2v^{\beta}\frac{\partial g_{\alpha\beta}}{\partial u^{\gamma}}du^{\alpha}\wedge du^{\gamma}+f^2g_{\alpha\beta}du^{\alpha}\wedge\delta^*v^{\beta}$}
\centerline{$=g_{ij}dx^i\wedge\delta^*y^j+f^2g_{\alpha\beta}du^{\alpha}\wedge\delta^*v^{\beta}$}
$$\Longrightarrow \ \ d\eta^*(X,Y)=G(X,JY)$$
Since $\bar{G}$ is the restriction of $G$ to the indicatrix and $d\eta^*(\xi,.)=0$ thus
$$d\eta^*(X,Y)=\bar{G}(X,\varphi Y)\ \ \ \ \ \forall X,Y\in\Gamma D$$
So, $I\!M$ with $(\varphi,\eta^*,\xi^*,\bar{G})$ is a contact manifold.
\end{proof}
After proving that indicatrix naturally has contact structure, it will cross the minds that in which case $I\!M$ will have Sasakian structure. In section 5, we show that indicatrix bundle cannot have Sasakian structure. Therefore, we should not have any expectation of properties of Sasakian manifolds in Riemannian geometry on indicatrix bundle as a Riemannian submanifold of $TM$.

%%%%%%%%%%%%%%%%%%%%%%%%%%%%%%%%%%%%%%%%%%%%%%%%%%%%%%%%%%%%%%%%%%%%%%%%%%%%%%%%%%%%%%%%%%%%%%%%%%%%%%%%
\section{A Kahler Structure on Finslerian Warped Product Manifolds}
The first part of this section includes the Kahler structure on tangent bundle of warped product Finsler manifolds with metric $G$ introduced in~(\ref{metric}). In Theorem~\ref{contact}, it was proved that the Kahler form $\Omega$ defined by $\Omega(X,Y)=G(X,JY)$ is close. Therefore, $TM$ naturally has an almost Kahler structure. Consequently, the Kahler structure on $(M,J,G)$ is equivalent to check the integrability condition of $J$. The integrability of $J$ is equal to the vanishing of tensor field $N_J$ given by:
\begin{equation}~\label{neij.}
N_J(X,Y)=[JX,JY]-J[JX,Y]-J[X,JY]-[X,Y]\ \ \ \ \ \forall X,Y\in\Gamma TM
\end{equation}
In order to simplify the equations, the basis~(\ref{M basis1}) can be rewritten as follows:
\begin{equation}~\label{basis4}
TTM=<\frac{\delta^*}{\delta^*\textbf{x}^a},\frac{\partial}{\partial\textbf{y}^a}>
\end{equation}
where $\frac{\delta^*}{\delta^*\textbf{x}^a}=\frac{\delta^*}{\delta^* x^i}\delta_a^i+\frac{\delta^*}{\delta^*u^{\alpha}}\delta_a^{\alpha+n}$ and $\frac{\partial}{\partial\textbf{y}^a}=\frac{\partial}{\partial y^i}\delta_a^i+\frac{\partial}{\partial v^{\alpha}}\delta_a^{\alpha+n}$. Moreover, the lie bracket of horizontal basis is given by:
$$[\frac{\delta^*}{\delta^*\textbf{x}^a},\frac{\delta^*}{\delta^*\textbf{x}^b}]=R_{\ ab}^c\frac{\partial}{\partial\textbf{y}^c}$$
In which the indices are in the same range as in Section 2. For different combinations of the basis~(\ref{basis4}) in computing $N_J$ the following equations are presented:
\begin{equation}~\label{int}
\left\{
\begin{array}{l}
N_J(\frac{\delta^*}{\delta^*\textbf{x}^a},\frac{\delta^*}{\delta^*\textbf{x}^b})=-
N_J(\frac{\partial}{\partial\textbf{y}^a},\frac{\partial}{\partial\textbf{y}^b})=-R_{\ ab}^c\frac{\partial}{\partial\textbf{y}^c},\cr
N_J(\frac{\delta^*}{\delta^*\textbf{x}^a},\frac{\partial}{\partial\textbf{y}^b})=-R_{\ ab}^c\frac{\delta^*}{\delta^*\textbf{x}^c}
\end{array}
\right.
\end{equation}
From these equations, the following Corollary can be inferred:
\begin{cor}~\label{sas. kah.}
Let $(M,J,G)$ be a warped Finsler manifold. Then, $TM$ is a Kahler manifold if and only if the horizontal distribution defined by $HTM:=<\frac{\delta^*}{\delta^*\textbf{x}^1},...,\frac{\delta^*}{\delta^*\textbf{x}^{n+m}}>$ is integrable.
\end{cor}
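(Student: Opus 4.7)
The plan is to show both sides of the claimed equivalence reduce to the same condition, namely the vanishing of the curvature coefficients $R^{c}_{\ ab}$ of the nonlinear connection on the horizontal distribution.

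First I would recall, as noted just before the corollary, that $(TM,J,G)$ is already an almost Kähler manifold because the Kähler 2-form $\Omega(X,Y)=G(X,JY)$ was shown to be closed in the course of proving Theorem~\ref{contact}. Consequently, upgrading the almost Kähler structure to a genuine Kähler structure is equivalent to the integrability of the almost complex structure $J$, which by the Newlander--Nirenberg theorem is equivalent to the vanishing of the Nijenhuis tensor $N_J$ defined in~(\ref{neij.}).

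Second, I would read off from~(\ref{int}) that on the adapted frame~(\ref{basis4}) all three types of components of $N_J$ are proportional to $R^{c}_{\ ab}$: specifically $N_J=0$ on every pair of basis vectors if and only if $R^{c}_{\ ab}=0$ for all $a,b,c$. Because $N_J$ is a tensor, its vanishing on a local frame implies its vanishing on all vector fields, so $J$ is integrable iff $R^{c}_{\ ab}=0$ identically.

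Third, I would compare this with the integrability of $HTM$. By the bracket formula $[\tfrac{\delta^*}{\delta^*\mathbf{x}^a},\tfrac{\delta^*}{\delta^*\mathbf{x}^b}]=R^{c}_{\ ab}\tfrac{\partial}{\partial\mathbf{y}^c}$ recalled just before~(\ref{int}), the bracket of any two horizontal basis fields is purely vertical. Since $TTM=HTM\oplus VTM$ is a direct sum, such a bracket lies in $HTM$ if and only if it is zero, so Frobenius integrability of $HTM$ is equivalent to $R^{c}_{\ ab}=0$. Combining the two characterizations yields the corollary.

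There is no real obstacle here; the content is entirely packaged in equations~(\ref{int}) and in the bracket formula for the horizontal basis, so the only step requiring care is the observation that the tensorial vanishing of $N_J$ on a frame extends to arbitrary vector fields and that a purely vertical vector lies in $HTM$ only if it vanishes.
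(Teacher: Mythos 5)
Your argument is correct and follows exactly the route the paper intends: the corollary is presented as an immediate inference from equations~(\ref{int}) together with the bracket formula $[\frac{\delta^*}{\delta^*\textbf{x}^a},\frac{\delta^*}{\delta^*\textbf{x}^b}]=R_{\ ab}^c\frac{\partial}{\partial\textbf{y}^c}$, both sides reducing to the vanishing of $R_{\ ab}^c$. You merely make explicit the steps the paper leaves implicit (closedness of $\Omega$ from Theorem~\ref{contact}, tensoriality of $N_J$, and the direct-sum argument for why a vertical bracket must vanish), so there is nothing to add.
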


Now, the Kahler structure on the tangent bundle of a warped Finsler manifold is studied using the Oproiu's metric~\cite{opiro1} which is used frequently in Physics phenomena. For the potential function $\tau$ defined on $F^2$ and the constants $A$ and $B$, the new metric $\tilde{G}$ is introduced as follows:
$$\tilde{G}=P_{ab}d\textbf{x}^a\otimes d\textbf{x}^b+Q_{ab}\delta^*\textbf{y}^a\otimes\delta^*\textbf{y}^b$$
where $P_{ab}=\frac{1}{A}g_{ab}+\frac{\tau(F^2)}{AB}\textbf{y}_a\textbf{y}_b$,
$Q_{ab}=Ag_{ab}-\frac{A\tau(F^2)}{B+F^2\tau(F^2)}\textbf{y}_a\textbf{y}_b$ and $\textbf{y}_a=g_{ab}\textbf{y}^b$.

It can be seen that $Q^{ab}:=g^{ac}Q_{cd}g^{db}=Ag^{ab}-\frac{A\tau(F^2)}{B+F^2\tau(F^2)}\textbf{y}^a\textbf{y}^b$ is the inverse matrix of $P_{ab}$. Also, the following $M$-tensor fields on $TM$ are obtained by the usual algebraic tensor operations,
$$\left\{\begin{array}{l}
P_a^b=P_{ac}g^{cb}=P^{bc}g_{ca}=\frac{1}{A}\delta_a^b+\frac{\tau(F^2)}{AB}\textbf{y}_a\textbf{y}^b \cr
Q_a^b=Q_{ac}g^{cb}=Q^{bc}g_{ca}=A\delta_a^b-\frac{A\tau(F^2)}{B+F^2\tau(F^2)}\textbf{y}_a\textbf{y}^b
\end{array}
\right.$$
The almost complex structure $\tilde{J}$ on $TM$ compatible with metric $\tilde{G}$ is presented as follow:
$$\tilde{J}(\frac{\partial}{\partial\textbf{y}^a}):=Q_a^b\frac{\delta^*}{\delta^*\textbf{x}^b},\ \ \ \tilde{J}(\frac{\delta^*}{\delta^*\textbf{x}^a}):=-P_a^b\frac{\partial}{\partial\textbf{y}^b}$$
\begin{Theorem}~\label{a.K.s.}
$(TM,\tilde{J},\tilde{G})$ is an almost Kahler manifold.
\end{Theorem}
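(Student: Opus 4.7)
My plan is to verify the three conditions that characterize an almost K\"ahler structure: (a) $\tilde{J}^{2}=-\mathrm{Id}$, (b) compatibility $\tilde{G}(\tilde{J}X,\tilde{J}Y)=\tilde{G}(X,Y)$, and (c) closedness of the fundamental 2-form $\tilde{\Omega}(X,Y):=\tilde{G}(X,\tilde{J}Y)$. All three will be checked on the adapted basis (\ref{basis4}), and (c) will be deduced by recognizing $\tilde{\Omega}$ as a form already shown closed in Section~3.

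For (a), applying $\tilde{J}$ twice to basis vectors gives $\tilde{J}^{2}(\partial/\partial\mathbf{y}^{a})=-Q_{a}^{c}P_{c}^{b}\,\partial/\partial\mathbf{y}^{b}$ and analogously for horizontal lifts, so the assertion reduces to the single algebraic identity $Q_{a}^{c}P_{c}^{b}=\delta_{a}^{b}$. I would establish this by direct substitution of the closed-form expressions for $P_{a}^{b}$ and $Q_{a}^{b}$: after expanding the product, the $\mathbf{y}_{a}\mathbf{y}^{b}$ terms combine with $\mathbf{y}^{c}\mathbf{y}_{c}=F^{2}$ into a single coefficient $\tfrac{\tau}{B}-\tfrac{\tau}{B+F^{2}\tau}-\tfrac{F^{2}\tau^{2}}{B(B+F^{2}\tau)}$ which vanishes identically. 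The excerpt's remark that $Q^{ab}$ inverts $P_{ab}$ could be invoked instead, but the direct computation is just as short.

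Condition (b) is verified by evaluating $\tilde{G}(\tilde{J}X,\tilde{J}Y)$ on each pair of basis vectors: mixed pairs vanish trivially because $\tilde{G}$ has no cross terms, while the purely horizontal and purely vertical pairs reduce to the auxiliary identity $P_{a}^{c}Q_{cd}=g_{ad}$ (and its transpose), which drops out of step (a) by lowering an index with $g_{ab}$ and using the symmetry of $P_{ab}$ and $Q_{ab}$.

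For (c), I would observe that $\tilde{\Omega}$ in fact coincides with the Sasaki K\"ahler form $\Omega(X,Y):=G(X,JY)$ on $TM$. A direct evaluation on the basis (\ref{basis4}), using $P_{ac}Q_{b}^{c}=g_{ab}$, yields
\[
\tilde{\Omega}\!\left(\tfrac{\delta^{*}}{\delta^{*}\mathbf{x}^{a}},\tfrac{\partial}{\partial\mathbf{y}^{b}}\right)=g_{ab},\qquad \tilde{\Omega}\!\left(\tfrac{\delta^{*}}{\delta^{*}\mathbf{x}^{a}},\tfrac{\delta^{*}}{\delta^{*}\mathbf{x}^{b}}\right)=\tilde{\Omega}\!\left(\tfrac{\partial}{\partial\mathbf{y}^{a}},\tfrac{\partial}{\partial\mathbf{y}^{b}}\right)=0,
\]
so $\tilde{\Omega}=g_{ab}\,d\mathbf{x}^{a}\wedge\delta^{*}\mathbf{y}^{b}$, which is precisely the closed 2-form that appears as $d\eta^{*}$ in the proof of Theorem~\ref{contact} (and is the $\Omega$ referenced in the paragraph opening Section~4); being exact, it is closed. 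The principal obstacle is purely bookkeeping: keeping the index positions on $P$, $Q$, $g$ straight through the tensorial manipulations. Once the single relation $Q_{a}^{c}P_{c}^{b}=\delta_{a}^{b}$ is in hand the remaining checks are routine, and a pleasant byproduct is that the deformation parameters $A,B,\tau$ drop out of $\tilde{\Omega}$ altogether, which is why closedness can be imported from the earlier theorem with no further work.
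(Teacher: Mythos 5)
Your proposal is correct and follows essentially the same route as the paper: verify metric compatibility by the index computation $Q_a^cQ_b^dP_{cd}=Q_{ab}$, $P_a^cP_b^dQ_{cd}=P_{ab}$, then show $\tilde{\Omega}=g_{ab}\,d\mathbf{x}^a\wedge\delta^*\mathbf{y}^b$ and import its closedness from the computation in Theorem~\ref{contact}. The only addition is your explicit check that $Q_a^cP_c^b=\delta_a^b$ (hence $\tilde{J}^2=-\mathrm{Id}$), which the paper asserts without proof when it states that $Q^{ab}$ inverts $P_{ab}$; your expansion of the $\mathbf{y}_a\mathbf{y}^b$ coefficient is correct and makes the argument self-contained.
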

\begin{proof}
First, compatibility of $\tilde{J}$ and $\tilde{G}$ are checked
$$\left\{\begin{array}{l}
\tilde{G}(\tilde{J}\frac{\partial}{\partial\textbf{y}^a},\tilde{J}\frac{\partial}{\partial\textbf{y}^b})
=Q_a^cQ_b^dP_{cd}=Q_{ae}g^{ec}g_{bf}Q^{fd}P_{cd}=Q_{ab}=\tilde{G}(\frac{\partial}{\partial\textbf{y}^a},
\frac{\partial}{\partial\textbf{y}^b})\cr
\tilde{G}(\tilde{J}\frac{\delta^*}{\delta^*\textbf{x}^a},\tilde{J}\frac{\delta^*}{\delta^*\textbf{x}^b})
=P_a^cP_b^dQ_{cd}=P_{ae}g^{ec}g_{bf}P^{fd}Q_{cd}=P_{ab}=\tilde{G}(\frac{\delta^*}{\delta^*\textbf{x}^a},
\frac{\delta^*}{\delta^*\textbf{x}^b})\end{array}\right.$$
For $(TM,\tilde{J},\tilde{G})$, the Kahler 2-form $\tilde{\Omega}$ is defined by $\tilde{\Omega}(X,Y)=\tilde{G}(X,\tilde{J}Y)$ and has local presentation as follows:
$$\left\{\begin{array}{l}
\tilde{\Omega}(\frac{\partial}{\partial\textbf{y}^a},\frac{\partial}{\partial\textbf{y}^b})=\tilde{\Omega}(
\frac{\delta^*}{\delta^*\textbf{x}^a},\frac{\delta^*}{\delta^*\textbf{x}^b})=0\cr
\tilde{\Omega}(\frac{\partial}{\partial\textbf{y}^a},\frac{\delta^*}{\delta^*\textbf{x}^b})=\tilde{G}(
\frac{\partial}{\partial\textbf{y}^a},-P_b^c\frac{\partial}{\partial\textbf{y}^c})=-P_b^cQ_{ac}=-g_{bd}
P^{dc}Q_{ac}=-g_{ab}\cr
\tilde{\Omega}(\frac{\delta^*}{\delta^*\textbf{x}^a},\frac{\partial}{\partial\textbf{y}^b})=\tilde{G}(
\frac{\delta^*}{\delta^*\textbf{x}^a},Q_b^c\frac{\delta^*}{\delta^*\textbf{x}^c})=Q_b^cP_{ac}=g_{bd}
Q^{dc}P_{ac}=g_{ab}\end{array}\right.$$
From the last equations, the following can be obtained:
$$\Longrightarrow\ \ \tilde{\Omega}=g_{ab}d\textbf{x}^a\wedge\delta^*\textbf{y}^b$$
This equation and Theorem~\ref{contact} show that $\tilde{\Omega}$ is closed and $(TM,\tilde{J},\tilde{G})$ is an almost Kahler manifold.
\end{proof}
To check that $\tilde{J}$ is a complex structure on $TM$, the local components of $N_{\tilde{J}}$ for the local basis~(\ref{M basis1}) of a warped product Finsler manifold is calculated.
\begin{equation}~\label{Nj}
\left\{
\begin{array}{l}
N_{\tilde{J}}(\frac{\delta^*}{\delta^*\textbf{x}^a},\frac{\delta^*}{\delta^*\textbf{x}^b})=(P_a^c\frac{\partial P_b^d}{\partial\textbf{y}^c}-P_b^c\frac{\partial P_a^d}{\partial\textbf{y}^c}-R_{\ ab}^d)\frac{\partial}{\partial\textbf{y}^d}\cr
+(P_{b|a}^d-P_{a|b}^d+P_b^cB_{ac}^d-P_a^cB_{cb}^d)Q_d^e\frac{\delta^*}{\delta^*\textbf{x}^e}\cr
N_{\tilde{J}}(\frac{\partial}{\partial\textbf{y}^a},\frac{\partial}{\partial\textbf{y}^b})=(Q_a^cQ_b^dR_{\ cd}^e-P_c^e\frac{\partial Q_a^c}{\partial\textbf{y}^b}+P_d^e\frac{\partial Q_b^d}{\partial\textbf{y}^a})\frac{\partial}{\partial\textbf{y}^e}\cr
+(Q_a^cQ_{b|c}^d-Q_b^cQ_{b|c}^d+Q_b^eB_{ae}^cQ_c^d-Q_a^cB_{cb}^eQ_e^d)\frac{\delta^*}{\delta^*\textbf{x}^d}\cr
N_{\tilde{J}}(\frac{\partial}{\partial\textbf{y}^a},\frac{\delta^*}{\delta^*\textbf{x}^b})=(P_b^d\frac{\partial Q_a^c}{\partial\textbf{y}^d}+\frac{\partial P_b^d}{\partial\textbf{y}^a}Q_d^c-Q_a^dQ_e^cR_{\ db}^e)\frac{\delta^*}{\delta^*\textbf{x}^c}\cr
-(Q_a^cP_{b|c}^d+Q_{a|b}^cP_c^d+Q_a^cB_{ce}^dP_b^e-B_{ab}^d)\frac{\partial}{\partial\textbf{y}^d}
\end{array}
\right.
\end{equation}
where $P_{b|c}^d=\frac{\delta^*P_b^d}{\delta^*\textbf{x}^c}$ and $Q_{a|b}^c=\frac{\delta^*Q_a^c}{\delta^*\textbf{x}^b}$. It can be obtained that $N_{\tilde{J}}(\frac{\delta^*}{\delta^*\textbf{x}^a},\frac{\delta^*}{\delta^*\textbf{x}^b})=0$ implies $N_{\tilde{J}}(\frac{\partial}{\partial\textbf{y}^a},\frac{\partial}{\partial\textbf{y}^b})=N_{\tilde{J}}(
\frac{\partial}{\partial\textbf{y}^a},\frac{\delta^*}{\delta^*\textbf{x}^b})=0$. Therefore, the following corollary can be expressed.
\begin{cor}~\label{c.s.}
Let $(TM,\tilde{J},\tilde{G})$ be the tangent manifold of warped Finsler manifold $(M,F)$. Then, $(TM,\tilde{J},\tilde{G})$ is Kahler manifold if and only if the followings hold:\\
$$P_a^c\frac{\partial P_b^d}{\partial\textbf{y}^c}-P_b^c\frac{\partial P_a^d}{\partial\textbf{y}^c}-R_{\ ab}^d=
P_{b|a}^d-P_{a|b}^d+P_b^cB_{ac}^d-P_a^cB_{cb}^d=0$$
\end{cor}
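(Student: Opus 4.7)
The plan is to combine Theorem \ref{a.K.s.} with the explicit formulas for $N_{\tilde{J}}$ listed in (\ref{Nj}). By Theorem \ref{a.K.s.}, $(TM, \tilde{J}, \tilde{G})$ is already almost Kahler, so the only additional requirement for a Kahler structure is the integrability of $\tilde{J}$, i.e. $N_{\tilde{J}} \equiv 0$ on every pair of basis fields.

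First I would read off from the first line of (\ref{Nj}) exactly what $N_{\tilde{J}}(\delta^*/\delta^*\textbf{x}^a,\delta^*/\delta^*\textbf{x}^b)=0$ means. Since $\{\partial/\partial\textbf{y}^d,\ \delta^*/\delta^*\textbf{x}^e\}$ is a basis, each coefficient must vanish independently. The coefficient of $\partial/\partial\textbf{y}^d$ is literally the first displayed equation of the corollary, while the coefficient of $\delta^*/\delta^*\textbf{x}^e$ is $(P_{b|a}^d-P_{a|b}^d+P_b^cB_{ac}^d-P_a^cB_{cb}^d)\,Q_d^e$. Because the mixed tensor $Q_d^e$ is invertible (it is built from the Riemannian metric $Q_{ab}$, which is itself the inverse of $P_{ab}$), the vanishing of this coefficient is equivalent to the vanishing of the bracketed factor, giving exactly the second displayed equation.

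Next I would justify the assertion made in the paragraph preceding the corollary, namely that vanishing on horizontal-horizontal pairs automatically forces vanishing on the other two combinations. The cleanest argument invokes the universal algebraic identities satisfied by any Nijenhuis tensor of an almost complex structure; from $\tilde{J}^2=-I$ one derives
\[
N_{\tilde{J}}(\tilde{J}X,Y)=N_{\tilde{J}}(X,\tilde{J}Y)=-\tilde{J}\,N_{\tilde{J}}(X,Y),\qquad N_{\tilde{J}}(\tilde{J}X,\tilde{J}Y)=-N_{\tilde{J}}(X,Y).
\]
Since $\tilde{J}$ sends each horizontal frame vector to a vertical one via the invertible matrix $-P_a^b$, and each vertical to a horizontal via the invertible $Q_a^b$, these identities transfer horizontal-horizontal vanishing to vertical-vertical and mixed vanishing, in complete parallel to the untwisted pattern already displayed in (\ref{int}).

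The main obstacle is notational bookkeeping: one must track the $P$- and $Q$-weighted contractions so that the equivalence between the horizontal-horizontal component of $N_{\tilde{J}}$ and the two stated equations is transparent, and so that the reduction from the second and third lines of (\ref{Nj}) is visibly consistent with the $\tilde{J}$-covariance identities above. A purely computational alternative would substitute the $\partial/\partial\textbf{y}^c$- and $\delta^*/\delta^*\textbf{x}^c$-derivatives of the algebraic relation $P_a^cQ_c^b=\delta_a^b$ directly into the last two lines of (\ref{Nj}) and cancel against the curvature term $R^d_{\ ab}$; this would prove the same implications but with substantially heavier algebra, so I would prefer the algebraic route via the Nijenhuis identities.
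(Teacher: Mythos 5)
Your proposal is correct and follows essentially the same route as the paper: invoke Theorem~\ref{a.K.s.} to reduce the Kahler condition to $N_{\tilde{J}}=0$, read the two displayed equations off the horizontal--horizontal line of~(\ref{Nj}) using the invertibility of $Q_d^e$ (which holds since $Q_a^bP_b^c=\delta_a^c$), and observe that the remaining components vanish automatically. The one place you go beyond the paper is in actually justifying that last implication via the standard identities $N_{\tilde{J}}(\tilde{J}X,Y)=-\tilde{J}N_{\tilde{J}}(X,Y)$ and $N_{\tilde{J}}(\tilde{J}X,\tilde{J}Y)=-N_{\tilde{J}}(X,Y)$ (valid here because $\tilde{J}^2=-I$), which the paper merely asserts; this is a correct and welcome addition.
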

\begin{proof}
It is an obvious conclusion of equations~(\ref{Nj}) and Theorem~\ref{a.K.s.}.
\end{proof}

%%%%%%%%%%%%%%%%%%%%%%%%%%%%%%%%%%%%%%%%%%%%%%%%%%%%%%%%%%%%%%%%%%%%%%%%%%%%%%%%%%%%%%%%%%%%%%%%%%%%%%%%
\section{Sasakian Structure and Indicatrix Bundle}
In this section, the components of Levi-Civita connection on the Indicatrix bundle of Finsler manifolds are computed. In the first part, a new frame is set on a Finsler manifold which decomposes $TTM$ in the following way:

\centerline{$TTM=<L>\oplus<\xi>\oplus<D>$}
\noindent where $L$ and $\xi$ are presented by $y^i\frac{\partial}{\partial y^i}$ and $y^i\frac{\delta}{\delta x^i}$, respectively. The next part includes the Sasakian structure on the indicatrix bundle of a Finsler manifold.

%%%%%%%%%%%%%%%%%%%%%%%%%%%%%%%%%%%%%%%%%%%%%%%%%%%%%%%%%%%
\subsection{Levi-Civita Connection on Indicatrix Bundle}
Supposed $(M,F)$ as an $n$-dimensional Finsler manifold. Since the vertical distribution $VM$ is integrable in $TTM$ and vertical Liouville vector field $L$ is a foliation in $TTM$ which belongs to $VM$, therefore, orthogonal distribution $L'M$ to $L$ in $VM$ is a foliation and the local basis can be set as follows:
$$L'M=<\frac{\bar{\partial}}{\bar{\partial}y^1},...,\frac{\bar{\partial}}{\bar{\partial}y^{n-1}}>$$
where $\frac{\bar{\partial}}{\bar{\partial}y^a}=E_a^i\frac{\partial}{\partial y^i}\ \ \ \forall a=1,...,n-1$
and $E_a^i$ is the $(n-1)\times n$ matrix of maximum rank. The first property of this matrix is $E_a^ig_{ij}y^j=0$
achieved by the feature $G(\frac{\bar{\partial}}{\bar{\partial}y^a},L)=0$. Now, using the natural almost complex structure $J$ on $TTM$, the new basis for $TTM$ is introduced as follows:
\begin{equation}~\label{new bas}
TTM=T(I\!M)\oplus <L>=<\frac{\bar{\delta}}{\bar{\delta}x^a},\ \xi,\ \frac{\bar{\partial}}{\bar{\partial}y^a},\ L>
\end{equation}
where $\frac{\bar{\delta}}{\bar{\delta}x^a}:=J\frac{\bar{\partial}}{\bar{\partial}y^a}$. The Sasakian metric $G$ on $TM$ can be shown in the new coordinate system~(\ref{new bas}) as follows:
\begin{equation}~\label{met.mat.}
G:=\left(%
\begin{array}{cccc}
  g_{ab} & 0 & 0 & 0\\
  0 & F^2 & 0 & 0\\
  0 & 0 & g_{ab} & 0\\
  0 & 0 & 0 & F^2\\
\end{array}%
\right)
\end{equation}
where $a,b\in\{1,...,n-1\}$ and $g_{ab}=G(\frac{\bar{\delta}}{\bar{\delta}x^a},\frac{\bar{\delta}}{\bar{\delta}x^b})
=G(\frac{\bar{\partial}}{\bar{\partial} y^a},\frac{\bar{\partial}}{\bar{\partial}y^b})=g_{ij}E_a^iE_b^j$. Now, Lie brackets of the basis~(\ref{new bas}) are presented as follows:
\begin{equation}~\label{lie bracket}
\left\{
\begin{array}{l}
(1) \ [\frac{\bar{\delta}}{\bar{\delta}x^a},\frac{\bar{\delta}}{\bar{\delta}x^b}]=(\frac{\bar{\delta}E_b^i}{\bar{\delta}x^a}
-\frac{\bar{\delta}E_a^i}{\bar{\delta}x^b})\frac{\delta}{\delta x^i}+E_a^iE_b^jR_{ij}^k\frac{\partial}{\partial y^k},\\
(2) \ [\frac{\bar{\delta}}{\bar{\delta}x^a},\frac{\bar{\partial}}{\bar{\partial}y^b}]=(\frac{\bar{\delta}E_b^k}{\bar{\delta}x^a}
+E_a^iE_b^jG_{ij}^k)\frac{\partial}{\partial y^k}-\frac{\bar{\partial}E_a^i}{\bar{\partial}y^b}\frac{\delta}{\delta x^i},\\
(3) \
[\frac{\bar{\partial}}{\bar{\partial}y^a},\frac{\bar{\partial}}{\bar{\partial}y^b}]=(
\frac{\bar{\partial}E_b^i}{\bar{\partial}y^a}-\frac{\bar{\partial}E_a^i}{\bar{\partial}y^b})\frac{\partial}{\partial y^i},\\
(4) \
[\frac{\bar{\delta}}{\bar{\delta}x^a},\xi]=-(E_a^iG_i^j+\xi(E_a^j))\frac{\delta}{\delta x^j}+E_a^iy^jR_{ij}^k
\frac{\partial}{\partial y^k},\\
(5) \
[\frac{\bar{\partial}}{\bar{\partial}y^a},\xi]=\frac{\bar{\delta}}{\bar{\delta}x^a}-(\xi(E_a^j)+E_a^iG_i^j)
\frac{\partial}{\partial y^j},\\
(6) \
[\frac{\bar{\delta}}{\bar{\delta}x^a},L]=-L(E_a^i)\frac{\delta}{\delta x^i},\\
(7) \
[\frac{\bar{\partial}}{\bar{\partial}y^a},L]=\frac{\bar{\partial}}{\bar{\partial}y^a}-L(E_a^i)
\frac{\partial}{\partial y^i},\\
(8) \
[\xi,\xi]=[L,L]=[\xi,L]+\xi=0.
\end{array}
\right.
\end{equation}
The local components of Levi-Civita connection $\nabla$ given by:
\begin{equation}~\label{levi-civita}
\left\{\begin{array}{l}
2G(\nabla_XY,Z)=XG(Y,Z)+YG(X,Z)-ZG(X,Y)\cr
-G([X,Z],Y)-G([Y,Z],X)+G([X,Y],Z)
\end{array}
\right.
\end{equation}
for basis~(\ref{new bas}) and metric~(\ref{met.mat.}) are expressed as follows:
\begin{equation}~\label{levicivita1}
\left\{
\begin{array}{l}
\nabla_{\frac{\bar{\delta}}{\bar{\delta}x^a}}\frac{\bar{\delta}}{\bar{\delta}x^b}=(\Gamma_{ab}^e+
\frac{\bar{\delta}E_b^j}{\bar{\delta}x^a}E_d^kg_{jk}g^{de})
{\frac{\bar{\delta}}{\bar{\delta}x^e}}+(-g_{ab}^e+\frac{1}{2}R_{ab}^e)\frac{\bar{\partial}}{\bar{\partial}y^e}
+\frac{1}{2F^2}\bar{R}_{ab}\xi,\\
\nabla_{\frac{\bar{\delta}}{\bar{\delta}x^a}}\frac{\bar{\partial}}{\bar{\partial}y^b}
=\left(\frac{1}{2}E_b^jE_d^kE_a^i(\frac{\delta g_{jk}}{\delta x^i}-G_{ik}^hg_{hj}+G_{ij}^hg_{hk})+\frac{\bar{\delta}E_b^j}{\bar{\delta}x^a}
E_d^kg_{jk}\right)g^{de}\frac{\bar{\partial}}{\bar{\partial}y^e}\cr +(g_{ab}^e-\frac{1}{2}R_{bad}g^{de})\frac{\bar{\delta}}{\bar{\delta}x^e}
+\frac{1}{2F^2}R_{ab}\xi,\\
\nabla_{\frac{\bar{\partial}}{\bar{\partial}y^b}}\frac{\bar{\delta}}{\bar{\delta}x^a}=(g_{ab}^e-
\frac{1}{2}R_{bad}g^{de}+\frac{\bar{\partial}E_a^i}{\bar{\partial}y^b}E_d^kg_{ik}g^{de}
)\frac{\bar{\delta}}{\bar{\delta}x^e}+\frac{1}{F^2}(\frac{1}{2}R_{ab}-g_{ab})\xi\cr
+\frac{1}{2}E_a^iE_b^jE_d^k(\frac{\delta g_{jk}}{\delta x^i}-G_{ik}^hg_{hj}
-G_{ij}^hg_{hk})g^{de}\frac{\bar{\partial}}{\bar{\partial}y^e},\\
\nabla_{\frac{\bar{\partial}}{\bar{\partial}y^a}}\frac{\bar{\partial}}{\bar{\partial}y^b}=\frac{1}{2}E_a^iE_b^jE_d^k
(G_{ik}^hg_{hj}+G_{jk}^hg_{hi}-\frac{\delta g_{ij}}{\delta x^k})g^{de}\frac{\bar{\delta}}{\bar{\delta}x^e}\cr +(g_{ab}^e+\frac{\bar{\partial}E_b^j}{\bar{\partial}y^a}E_d^kg_{kj}g^{de})\frac{\bar{\partial}}{\bar{\partial}y^e}
-\frac{1}{F^2}g_{ab}L,\\
\nabla_{\frac{\bar{\delta}}{\bar{\delta}x^a}}\xi=\frac{1}{2}\bar{R}_{da}g^{de}
\frac{\bar{\delta}}{\bar{\delta}x^e}-\frac{1}{2}R_{ad}g^{de}\frac{\bar{\partial}}{\bar{\partial}y^e},\\
\nabla_{\xi}\frac{\bar{\delta}}{\bar{\delta}x^a}=(\xi(E_a^i)E_d^kg_{ik}+E_a^iG_i^hg_{hk}E_d^k+\frac{1}{2}\bar{R}_{da})
g^{de}\frac{\bar{\delta}}{\bar{\delta}x^e}+\frac{1}{2}R_{ad}g^{de}\frac{\bar{\partial}}{\bar{\partial}y^e},\\
\nabla_{\frac{\bar{\partial}}{\bar{\partial}y^a}}\xi=(\delta_a^e-\frac{1}{2}R_{ad}g^{de})
\frac{\bar{\delta}}{\bar{\delta}x^e},\\
\nabla_{\xi}\frac{\bar{\partial}}{\bar{\partial}y^a}=-\frac{1}{2}R_{ad}g^{de}\frac{\bar{\delta}}{\bar{\delta}x^e}
+(\xi(E_a^i)g_{ik}+E_a^iG_i^jg_{jk})E_d^kg^{de}\frac{\bar{\partial}}{\bar{\partial}y^e},\\
\nabla_{\frac{\bar{\delta}}{\bar{\delta}x^a}}L=\nabla_L\frac{\bar{\delta}}{\bar{\delta}x^a}-L(E_a^i)E_d^kg_{ik}g^{de}
\frac{\bar{\delta}}{\bar{\delta}x^e}=0,\\
\nabla_{\frac{\bar{\partial}}{\bar{\partial}y^a}}L
-\frac{\bar{\partial}}{\bar{\partial}y^a}=\nabla_L\frac{\bar{\partial}}{\bar{\partial}y^a}-L(E_a^i)E_d^kg_{ik}g^{de}
\frac{\bar{\partial}}{\bar{\partial}y^e}=0,\\
\nabla_{\xi}\xi=\nabla_{\xi}L=\nabla_L\xi-\xi=\nabla_LL-L=0.
\end{array}
\right.
\end{equation}
where

\centerline{$g_{ab}^c=g_{abd}g^{dc}=\frac{1}{2}E_a^iE_b^jE_d^kg_{ijk}g^{dc}\ ,\ \ \ \ \ \ \Gamma_{ab}^c=E_a^iE_b^jE_d^k\Gamma_{ij}^hg_{hk}g^{dc}$}
\centerline{$R_{ab}^c=R_{dab}g^{dc}=E_a^iE_b^jE_d^kR_{ij}^hg_{hk}g^{dc},$}
\centerline{$\bar{R}_{ab}=(\frac{\bar{\delta}E_b^i}{\bar{\delta}x^a}
-\frac{\bar{\delta}E_a^i}{\bar{\delta}x^b})g_{ij}y^j\ ,\ \ \ \ \ \ R_{ab}=E_a^iE_b^jR_{ij}$}
\noindent and, $(g^{ab})$ is the inverse matrix of $(g_{ab})$.

In following, the Levi-Civita connection and metric on indicatrix bundle are denoted by $\bar{\nabla}$ and $\bar{G}$, respectively, which $\bar{G}$ is the restriction of metric~(\ref{met.mat.}). In order to compute the components of Levi-Civita connection $\bar{\nabla}$ on indicatrix bundle $I\!M$ for the basis~(\ref{new bas}) the \emph{Guass Formula}~\cite{lee}:
\begin{equation}~\label{guass}
\nabla_XY=\bar{\nabla}_XY+H(X,Y)
\end{equation}
where $H$ is the \emph{second fundamental form} of $I\!M$ is needed. It is obvious that all components in~(\ref{levicivita1}) and except $\nabla_{\frac{\bar{\partial}}{\bar{\partial}y^a}}\frac{\bar{\partial}}{\bar{\partial}y^b}$ are tangent to $I\!M$. Therefore, $\bar{\nabla}$ is equal to $\nabla$ for the other components of~(\ref{levicivita1}) by using the Gauss formula~(\ref{guass}). The following corollary can be stated for the component $\nabla_{\frac{\bar{\partial}}{\bar{\partial}y^a}}\frac{\bar{\partial}}{\bar{\partial}y^b}$.
\begin{cor}~\label{tot. geo.}
The indicatrix bundle $I\!M$ is not a totally geodesic submanifold of $TM$.
\end{cor}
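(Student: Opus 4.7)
The plan is to read off the obstruction directly from the list of connection components in~(\ref{levicivita1}). By~(\ref{new bas}) the decomposition $TTM=T(I\!M)\oplus\langle L\rangle$ places $L$ in the normal bundle of the indicatrix, so the Gauss formula~(\ref{guass}) identifies the second fundamental form $H(X,Y)$ with the $L$-component of $\nabla_X Y$. Thus, to prove $I\!M$ is not totally geodesic it suffices to locate one pair of tangent fields $X,Y\in\Gamma(T(I\!M))$ for which $\nabla_XY$ has a nonzero projection onto $\langle L\rangle$.

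First, I would scan~(\ref{levicivita1}) systematically: the inputs $\frac{\bar{\delta}}{\bar{\delta}x^a}$, $\frac{\bar{\partial}}{\bar{\partial}y^a}$, $\xi$ are tangent to $I\!M$, and for every such pair of inputs the right-hand side is expressed in the frame $\{\frac{\bar{\delta}}{\bar{\delta}x^e},\frac{\bar{\partial}}{\bar{\partial}y^e},\xi,L\}$. A direct inspection shows that all outputs lie in $T(I\!M)$ except the one line $\nabla_{\frac{\bar{\partial}}{\bar{\partial}y^a}}\frac{\bar{\partial}}{\bar{\partial}y^b}$, which carries the isolated summand $-\frac{1}{F^2}g_{ab}L$. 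Restricting to $I\!M$ (where $F=1$), the Gauss formula therefore gives
\[
H\!\left(\frac{\bar{\partial}}{\bar{\partial}y^a},\frac{\bar{\partial}}{\bar{\partial}y^b}\right)=-g_{ab}\,L.
\]

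The remaining step is to observe that this expression is not identically zero. Since $g_{ab}=g_{ij}E_a^iE_b^j$ is the restriction of the positive-definite fundamental tensor to the rank-$(n-1)$ distribution $L'M$, it is nondegenerate; combined with $L\neq 0$ (indeed $G(L,L)=1$), this forces $H\not\equiv 0$, so $I\!M$ is not a totally geodesic submanifold of $(TM,G)$.

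No genuine obstacle is expected here: the content of the corollary is entirely a bookkeeping consequence of~(\ref{levicivita1}) together with the fact that $L$ spans the normal bundle of the indicatrix. The only point worth emphasizing in the write-up is that the offending $L$-term is proportional to the metric itself, which rules out any possibility of $H$ vanishing on a nonempty open subset of $I\!M$.
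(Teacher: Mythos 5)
Your argument is correct and is essentially the paper's own proof: both read off from~(\ref{levicivita1}) that the only connection component with a nonzero $L$-part is $\nabla_{\frac{\bar{\partial}}{\bar{\partial}y^a}}\frac{\bar{\partial}}{\bar{\partial}y^b}$, identify $H(\frac{\bar{\partial}}{\bar{\partial}y^a},\frac{\bar{\partial}}{\bar{\partial}y^b})=-\frac{1}{F^2}g_{ab}L$ via the Gauss formula, and conclude from the nonvanishing of $g_{ab}$. Your added remark that $g_{ab}=g_{ij}E_a^iE_b^j$ is nondegenerate because it is the restriction of the positive-definite fundamental tensor to $L'M$ is a welcome (if minor) sharpening of the paper's bare assertion that the term ``cannot vanish.''
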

\begin{proof}
From~(\ref{levicivita1}), we obtain
$H(\frac{\bar{\partial}}{\bar{\partial}y^a},\frac{\bar{\partial}}{\bar{\partial}y^b})=-\frac{1}{F^2}g_{ab}L$
which it cannot be vanish. Therefore, $I\!M$ is not totally geodesic submanifold of $TM$.
\end{proof}
The curvature tensor $R$ of $\nabla$ defined by $R(X,Y)Z=\nabla_X\nabla_YZ-\nabla_Y\nabla_XZ-\nabla_{[X,Y]}Z$
is related to the curvature tensor $\bar{R}$ of $\bar{\nabla}$ in following equations:
\begin{equation}~\label{cur}
\left\{
\begin{array}{l}
R(\frac{\bar{\delta}}{\bar{\delta}x^a},\frac{\bar{\delta}}{\bar{\delta}x^b})\frac{\bar{\partial}}{\bar{\partial}y^c}
=\bar{R}(\frac{\bar{\delta}}{\bar{\delta}x^a},\frac{\bar{\delta}}{\bar{\delta}x^b})
\frac{\bar{\partial}}{\bar{\partial}y^c}+\frac{1}{F^2}R_{cab}L\\
R(\frac{\bar{\delta}}{\bar{\delta}x^a},\frac{\bar{\partial}}{\bar{\partial}y^b})\frac{\bar{\delta}}{\bar{\delta}x^c}
=\bar{R}(\frac{\bar{\delta}}{\bar{\delta}x^a},\frac{\bar{\partial}}{\bar{\partial}y^b})
\frac{\bar{\delta}}{\bar{\delta}x^c}+\frac{1}{2F^2}(R_{bac}-2g_{abc})L\\
R(\frac{\bar{\partial}}{\bar{\partial}y^a},\frac{\bar{\partial}}{\bar{\partial}y^b})
\frac{\bar{\partial}}{\bar{\partial}y^c}
=\bar{R}(\frac{\bar{\partial}}{\bar{\partial}y^a},\frac{\bar{\partial}}{\bar{\partial}y^b})
\frac{\bar{\partial}}{\bar{\partial}y^c}-\frac{1}{F^2}g_{bc}\frac{\bar{\partial}}{\bar{\partial}y^a}+
\frac{1}{F^2}g_{ac}\frac{\bar{\partial}}{\bar{\partial}y^b}\\
R(\frac{\bar{\delta}}{\bar{\delta}x^a},\frac{\bar{\partial}}{\bar{\partial}y^b})
\frac{\bar{\partial}}{\bar{\partial}y^c}
=\bar{R}(\frac{\bar{\delta}}{\bar{\delta}x^a},\frac{\bar{\partial}}{\bar{\partial}y^b})
\frac{\bar{\partial}}{\bar{\partial}y^c}+\frac{1}{2F^2}E_a^iE_b^jE_d^k
(G_{ik}^hg_{hj}+G_{jk}^hg_{hi}-\frac{\delta g_{ij}}{\delta x^k})L\\
R(\frac{\bar{\delta}}{\bar{\delta}x^a},\frac{\bar{\partial}}{\bar{\partial}y^b})\xi=
\bar{R}(\frac{\bar{\delta}}{\bar{\delta}x^a},\frac{\bar{\partial}}{\bar{\partial}y^b})\xi-\frac{1}{2F^2}R_{ab}L\\
R(\frac{\bar{\partial}}{\bar{\partial}y^a},\xi)\frac{\bar{\delta}}{\bar{\delta}x^b}=
\bar{R}(\frac{\bar{\partial}}{\bar{\partial}y^a},\xi)\frac{\bar{\delta}}{\bar{\delta}x^b}-\frac{1}{2F^2}R_{ab}L\\
R(\frac{\bar{\delta}}{\bar{\delta}x^a},\xi)\frac{\bar{\partial}}{\bar{\partial}y^b}=
\bar{R}(\frac{\bar{\delta}}{\bar{\delta}x^a},\xi)\frac{\bar{\partial}}{\bar{\partial}y^b}-\frac{1}{F^2}R_{ab}L
\end{array}
\right.
\end{equation}
where $G_{abc}=E_a^iE_b^jE_c^kG_{ij}^hg_{hk}$. For the other combinations of $\frac{\bar{\delta}}{\bar{\delta}x^a},\frac{\bar{\partial}}{\bar{\partial}y^a}$ and $\xi$, tensor fields $R$ and $\bar{R}$ coincide with each other.

%%%%%%%%%%%%%%%%%%%%%%%%%%%%%%%%%%%%%%%%%%%%%%%%%%%%%%%%%%%%%%%%%%%%%%%%%
\subsection{Sasakian Structure and Indicatrix Bundle of a Finsler Manifold}
First, let $(M,\varphi,\eta,\xi,g)$ be a contact Riemannian manifold. In~\cite{bejancu2}, it was proved that $M$ is Sasakain manifold if and only if
\begin{equation}~\label{sasa1}
(\tilde{\nabla}_X\varphi)Y=0 \ \ \ \ \forall X,Y\in\Gamma(TM)
\end{equation}
where
$$\tilde{\nabla}_XY=\nabla_XY-\eta(X)\nabla_Y\xi-\eta(Y)\nabla_X\xi+(d\eta+\frac{1}{2}(\mathcal{L}_{\xi}g))(X,Y)\xi$$
and $\nabla$ is Levi-Civita connection on $(M,g)$. Since the indicatrix bundle has the contact metric structure in Finslerian manifolds by Proposition 4.1 in~\cite{bejancu2008} and in warped product Finsler manifolds by Theorem~\ref{contact}, here it is tried to find an answer to the question that "\emph{Can the indicatrix bundle be a Sasakian manifold with restricted sasaki metric $G$ and almost complex structure $J$ on $TM$?}". First, the following Lemma is proved in order to reduce the number of calculations.
\begin{lemma}\label{redu}
If $(M,\varphi,\eta,\xi,g)$ be a contact metric manifold with contact distribution $D$, then $M$ is Sasakian manifold if and only if:
$$(\tilde{\nabla}_X\varphi)Y=0 \ \ \ \ \forall X,Y\in\Gamma(D)$$
\end{lemma}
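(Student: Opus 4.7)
The forward direction is immediate since $D\subset TM$. For the converse, assume $(\tilde{\nabla}_X\varphi)Y=0$ for $X,Y\in\Gamma(D)$; I would extend this to $\Gamma(TM)$ by exploiting that $S(X,Y):=(\tilde{\nabla}_X\varphi)Y$ is a $(1,2)$-tensor (it is tensorial in $X$ directly from the formula defining $\tilde{\nabla}$, and tensorial in $Y$ after cancelling the Leibniz terms coming from $\tilde{\nabla}_X(fY)$ against those coming from $\tilde{\nabla}_X(f\varphi Y)$). Together with the splitting $TM=D\oplus\langle\xi\rangle$, this reduces the problem to the three remaining cases $(X_D,\xi)$, $(\xi,Y_D)$, and $(\xi,\xi)$, where the subscript $D$ denotes a section of the contact distribution.

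All cases with $Y=\xi$ collapse via $\varphi\xi=0$ to $(\tilde{\nabla}_X\varphi)\xi=-\varphi(\tilde{\nabla}_X\xi)$. I would then compute $\tilde{\nabla}_X\xi$ directly from the defining expression and show it vanishes identically, using the standard contact metric identities $\nabla_\xi\xi=0$, $d\eta(\cdot,\xi)=g(\cdot,\varphi\xi)=0$, and $(\mathcal{L}_\xi g)(\cdot,\xi)=0$ (the last following from $g(\xi,\xi)=1$, which forces $g(\nabla_X\xi,\xi)=0$). The result $\tilde{\nabla}_X\xi\equiv 0$ disposes of $(X_D,\xi)$ and $(\xi,\xi)$ at once.

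The essential case is $(\xi,Y_D)$. Those same identities together with the torsion-freeness of $\nabla$ give $\tilde{\nabla}_\xi Z=\nabla_\xi Z-\nabla_Z\xi=[\xi,Z]$ for every $Z\in\Gamma(D)$, and the bracket remains in $D$ because $\eta([\xi,Z])$ is built out of $\xi\eta(Z)$, $Z\eta(\xi)$, and $d\eta(\xi,Z)$, all of which vanish. Applying this to both $Y$ and $\varphi Y$ gives
\[
(\tilde{\nabla}_\xi\varphi)Y\;=\;[\xi,\varphi Y]-\varphi[\xi,Y]\;=\;(\mathcal{L}_\xi\varphi)(Y),
\]
so the lemma reduces to showing that the $D\times D$ hypothesis forces $\mathcal{L}_\xi\varphi$ to vanish on $D$. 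My plan for this final step is to pair $(\mathcal{L}_\xi\varphi)Y$ against an arbitrary $Z\in\Gamma(D)$ via $g$, then rewrite the resulting scalar using the skew-symmetry of $\varphi$ relative to $g$ and the metric compatibility of $\tilde{\nabla}$ so as to re-express it as a combination of terms of the form $g((\tilde{\nabla}_A\varphi)B,C)$ with $A,B\in\Gamma(D)$, each of which vanishes by hypothesis. I expect this conversion of transverse ($\xi$-directional) information into tangential ($D\times D$) information to be the main technical hurdle; the earlier case reductions are routine bookkeeping with the contact metric axioms.
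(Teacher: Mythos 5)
Your reduction is sound as far as it goes, and it is actually more careful than the paper's own proof: the paper performs the same decomposition $\bar{X}=X+f\xi$, $\bar{Y}=Y+g\xi$ and then silently drops the term $f\bigl(\tilde{\nabla}_{\xi}\varphi Y-\varphi\tilde{\nabla}_{\xi}Y\bigr)$ in its final equality, with no justification. You correctly compute $\tilde{\nabla}_X\xi=0$ (disposing of every case with $Y=\xi$), and correctly identify $\tilde{\nabla}_\xi Z=[\xi,Z]$ for $Z\in\Gamma(D)$, so that $(\tilde{\nabla}_\xi\varphi)Y=(\mathcal{L}_\xi\varphi)Y=2hY$, where $h=\frac12\mathcal{L}_\xi\varphi$ is the standard tensor of contact metric geometry. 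That is precisely the term the paper ignores, and you are right that it is the whole content of the lemma.

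The gap is the final step you defer: the $D\times D$ hypothesis does not force $h|_D=0$, so no rewriting of $g((\mathcal{L}_\xi\varphi)Y,Z)$ as a combination of terms $g((\tilde{\nabla}_A\varphi)B,C)$ with $A,B\in\Gamma(D)$ can succeed. For $X,Y\in\Gamma(D)$ one checks that $\tilde{\nabla}_XY=\nabla_XY-\eta(\nabla_XY)\xi$, so $(\tilde{\nabla}_X\varphi)Y$ is exactly the $D$-component of $(\nabla_X\varphi)Y$; since on any contact metric manifold the $\xi$-component of $(\nabla_X\varphi)Y$ is automatically $g(X+hX,Y)$, the hypothesis is equivalent to Tanno's identity $(\nabla_X\varphi)Y=g(X+hX,Y)\xi$ on $D\times D$, i.e., to integrability of the underlying CR structure. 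Every three-dimensional contact metric manifold satisfies this (a CR structure with one-dimensional holomorphic tangent space is automatically integrable), yet non-K-contact examples with $h\neq0$ abound, for instance the standard contact metric structure on the unit tangent bundle of the hyperbolic plane. Hence the ``if'' direction of the statement fails as written: the condition on $\Gamma(D)\times\Gamma(D)$ must be supplemented by $(\tilde{\nabla}_\xi\varphi)Y=0$ (equivalently $h=0$) to recover the Sasakian property. Both your proposed final step and the paper's proof founder on this same missing term; your proposal has the merit of isolating it explicitly rather than concealing it.
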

\begin{proof}
For all $\bar{X}\in\Gamma(TM)$, they can be written in the form $X+f\xi$ where $X\in\Gamma D$ and $f\in C^{\infty}(M)$. Therefore:

\centerline{$(\tilde{\nabla}_{\bar{X}}\varphi)\bar{Y}=(\tilde{\nabla}_{X+f\xi}\varphi)(Y+g\xi)=
(\tilde{\nabla}_X\varphi)Y+(\tilde{\nabla}_{f\xi}\varphi)Y+(\tilde{\nabla}_X\varphi)g\xi$}
\centerline{$+(\tilde{\nabla}_{f\xi}\varphi)g\xi=(\tilde{\nabla}_X\varphi)Y+f(\tilde{\nabla}_{\xi}\varphi Y-\varphi\tilde{\nabla}_{\xi}Y)+\tilde{\nabla}_X\varphi(g\xi)-\varphi(\tilde{\nabla}_Xg\xi)$}
\centerline{$+f(\tilde{\nabla}_{\xi}\varphi{g\xi}-\varphi\tilde{\nabla}_{\xi}g\xi)=(\tilde{\nabla}_X\varphi)Y$}
\noindent The lemma is proved using Theorem 3.2 in~\cite{bejancu2} and the last equation.
\end{proof}
Now, the following Theorem can be expressed:
\begin{Theorem}~\label{sasaki}
Let $(M,F)$ be a Finsler manifold. Then, indicatrix bundle $I\!M(1)$ with its contact structure given in Theorem~\ref{contact} and Proposition 4.1 in~\cite{bejancu2008} can never be a Sasakian manifold.
\end{Theorem}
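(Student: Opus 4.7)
The plan is to reach a contradiction from the classical characterization that a contact metric manifold is Sasakian if and only if its Levi-Civita connection satisfies $\nabla_{X}\xi=-\varphi X$ for every tangent vector $X$. Applied to the candidate data $(I\!M,\varphi,\eta^{*},\xi^{*},\bar{G})$, this demands $\bar{\nabla}_{X}\xi^{*}=-\varphi X$, and by Lemma~\ref{redu} it suffices to verify the identity on the contact distribution $D=\langle \bar{\delta}/\bar{\delta}x^{a},\ \bar{\partial}/\bar{\partial}y^{a}\rangle$. I would test it against the two natural frame directions of $D$ and show the resulting algebraic constraints on the horizontal curvature quantity $R_{ab}$ are mutually incompatible.

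The preparatory step is to rewrite $\bar{\nabla}_{X}\xi^{*}$ as $\nabla_{X}\xi$. The Gauss formula~(\ref{guass}) permits this whenever $H(X,\xi^{*})=0$, and~(\ref{levicivita1}) yields $\nabla_{\bar{\partial}/\bar{\partial}y^{a}}L=\bar{\partial}/\bar{\partial}y^{a}$ together with $\nabla_{\bar{\delta}/\bar{\delta}x^{a}}L=0$, both $G$-orthogonal to $\xi$; so metric compatibility forces $G(\nabla_{X}\xi,L)=0$ and the Gauss correction drops out for both frame directions. Now one simply compares. Taking $X=\bar{\partial}/\bar{\partial}y^{a}$, the identity $-\varphi X=-\bar{\delta}/\bar{\delta}x^{a}$ juxtaposed with $\nabla_{X}\xi=(\delta_{a}^{e}-\tfrac{1}{2}R_{ad}g^{de})\bar{\delta}/\bar{\delta}x^{e}$ from~(\ref{levicivita1}) forces $R_{ab}=4g_{ab}$. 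Taking $X=\bar{\delta}/\bar{\delta}x^{a}$, the identity $-\varphi X=\bar{\partial}/\bar{\partial}y^{a}$ juxtaposed with $\nabla_{X}\xi=\tfrac{1}{2}\bar{R}_{da}g^{de}\bar{\delta}/\bar{\delta}x^{e}-\tfrac{1}{2}R_{ad}g^{de}\bar{\partial}/\bar{\partial}y^{e}$ forces both $\bar{R}_{ab}=0$ and $R_{ab}=-2g_{ab}$. The two demands on $R_{ab}$ combine to $6g_{ab}=0$, which contradicts the nondegeneracy of $\bar{G}$, so no such Sasakian structure can exist.

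The main conceptual obstacle is bridging Lemma~\ref{redu}'s $\tilde{\nabla}$-based criterion with the $\bar{\nabla}$-based identity $\bar{\nabla}_{X}\xi^{*}=-\varphi X$ that I actually test. The two are classically equivalent for contact metric manifolds: $\bar{\nabla}_{X}\xi^{*}=-\varphi X$ can be extracted from $(\tilde{\nabla}_{X}\varphi)\xi^{*}=0$ by using $\varphi\xi^{*}=0$, $\bar{G}(\xi^{*},\xi^{*})=1$, and metric compatibility, but a careful reader working only from Lemma~\ref{redu} (stated for $Y\in D$) will want this transition spelled out explicitly. Once that bridge is in place, the rest reduces to a few lines of index bookkeeping against~(\ref{levicivita1}), with no delicate estimates or case analysis required; in particular, the contradiction is purely algebraic and is genuinely independent of the Finsler manifold $(M,F)$.
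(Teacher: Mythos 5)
Your argument is correct, and it reaches the contradiction by a genuinely different route than the paper's primary proof. The paper applies Lemma~\ref{redu} literally: it evaluates $(\tilde{\nabla}_X\varphi)Y$ for the four pairs of frame vectors drawn from $D=\langle\bar{\delta}/\bar{\delta}x^a,\ \bar{\partial}/\bar{\partial}y^a\rangle$ and observes (very tersely, with the computation suppressed) that one component of the resulting system is $g_{ab}=0$. You instead test the weaker necessary condition $\nabla_X\xi=-\varphi X$ (the K-contact condition), which pits the two connection components $\nabla_{\bar{\partial}/\bar{\partial}y^a}\xi=(\delta_a^e-\tfrac{1}{2}R_{ad}g^{de})\bar{\delta}/\bar{\delta}x^e$ and $\nabla_{\bar{\delta}/\bar{\delta}x^a}\xi=\tfrac{1}{2}\bar{R}_{da}g^{de}\bar{\delta}/\bar{\delta}x^e-\tfrac{1}{2}R_{ad}g^{de}\bar{\partial}/\bar{\partial}y^e$ against $-\varphi(\bar{\partial}/\bar{\partial}y^a)=-\bar{\delta}/\bar{\delta}x^a$ and $-\varphi(\bar{\delta}/\bar{\delta}x^a)=\bar{\partial}/\bar{\partial}y^a$, yielding $R_{ab}=4g_{ab}$ and $R_{ab}=-2g_{ab}$ simultaneously; I have checked these against~(\ref{levicivita1}) and they are right, as is your Gauss-formula argument that $H(X,\xi)=0$ so that $\bar{\nabla}_X\xi=\nabla_X\xi$ on these directions. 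Your route is in fact closer in spirit to the paper's ``Another proof,'' which also exploits the Killing property of $\xi$ on a Sasakian manifold, but yours stays entirely local and verifiable from the displayed connection components, and it is robust against sign conventions (either sign of $\varphi$ in the K-contact identity still forces two incompatible multiples of $g_{ab}$). What the paper's approach buys is that it never leaves the framework of Lemma~\ref{redu}; what yours buys is an explicit, checkable contradiction.

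Two small repairs are needed in your write-up. First, $\nabla_X\xi=-\varphi X$ characterizes K-contact structures, not Sasakian ones (Sasakian is strictly stronger in dimension $\geq 5$), so your opening ``if and only if'' is an overstatement; since you only use the implication Sasakian $\Rightarrow$ K-contact $\Rightarrow$ $\nabla_X\xi=-\varphi X$, the logic survives, but state it as a necessary condition. Second, the ``bridge'' through Lemma~\ref{redu} is both awkward and unnecessary: the lemma restricts $Y$ to $\Gamma(D)$, so $(\tilde{\nabla}_X\varphi)\xi^*=0$ is not literally one of its instances; cite instead the unrestricted criterion~(\ref{sasa1}) or Blair's Lemma 6.2 in~\cite{Blair}, and drop the reduction to $D$ altogether, since violating a necessary identity at a single frame vector already finishes the proof.
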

\begin{proof}
From lemma~\ref{redu}, $I\!M$ is a Sasakian manifold if and only if:

\centerline{$(\tilde{\nabla}_{\frac{\bar{\delta}}{\bar{\delta}x^a}}\varphi)\frac{\bar{\delta}}{\bar{\delta}x^b}= (\tilde{\nabla}_{\frac{\bar{\delta}}{\bar{\delta}x^a}}\varphi)\frac{\bar{\partial}}{\bar{\partial}y^b}= (\tilde{\nabla}_{\frac{\bar{\partial}}{\bar{\partial}y^a}}\varphi)\frac{\bar{\delta}}{\bar{\delta}x^b}= (\tilde{\nabla}_{\frac{\bar{\partial}}{\bar{\partial}y^e}}\varphi)\frac{\bar{\partial}}{\bar{\partial}y^b}=0$}
\noindent Using~(\ref{levicivita1}), one of the components in above equations is

\centerline{$g_{ab}=0$}
\noindent which demonstrates a contradiction and shows that the indicatrixe bundle cannot be in the Sasakian structure.
\end{proof}

%%%%%%%%%%%%%%%%%%%%%%%%%%%%%%%%%%%%%%%%%%%%%%%%%%%%%%%%%%%%%%%%%%%%%%%%%%%%%

\noindent\textbf{Another proof for Theorem~\ref{sasaki}}\\
The following argument was presented in~\cite{galva} and Chapter 6 in~\cite{Blair}. We consider $TM=I\!M\times\mathbb{R}$ for the Finslerian warped product manifold $(M,F)$ and introduce the almost complex structure $\bar{J}$ by means of $\varphi$ defined in~(\ref{phi}) as follows:
$$\bar{J}(X+fL)=\varphi(X)-f\xi+\eta(X)L$$
where $X$ is a vector field tangent to indicatrix bundle and $\xi,\eta$ were defined in Section 3. Using a straight calculation, it can be seen that $\bar{J}$ is equal to $J$ defined in~(\ref{a.c.s.}). The contact structure $(\varphi,\eta,\xi)$ will be a Sasakian structure if and only if $(\varphi,\eta,\xi)$ is normal, or equivalently, $\bar{J}$ is integrable. From~(\ref{int}), it can be inferred that $\bar{J}$ is integrable if and only if $M$ is a flat manifold. Up to now, it can be shown that $I\!M$ is Sasakian if and only if $M$ is flat. Furthermore, it was proved that $\xi$ is a killing vector field on each Sasakian manifold~\cite{Blair}. Therefore, $M$ has constant curvature 1 using  Theorem 3.4 in~\cite{bejancu} and it is a contradiction to the previous result which shows that $M$ is flat if $I\!M$ is a Sasakian manifold. $\Box$

%%%%%%%%%%%%%%%%%%%%%%%%%%%%%%%%%%%%%%%%%%%%%%%%%%%%%%%%%%%%%%%%%%%%%%%%%%%%%%%%%%%%%%%%%%%

Hassan Attarchi\\
Department of Mathematics and Computer Science,\\
Amirkabir University of Technology, Tehran, Iran.
E-mail: hassan.attarchi@aut.ac.ir\\

\noindent Corresponding author: Morteza Mir Mohammad Rezaii;\\
Department of Mathematics and Computer Science,\\
Amirkabir University of Technology, Tehran, Iran.
E-mail: mmreza@aut.ac.ir\\
\end{document}